\documentclass[12pt]{amsart}
\usepackage{SSdefn}

\usepackage{eucal}

\title[Proof of Stembridge's conjecture on Kronecker coefficients]{Proof of Stembridge's conjecture on \\stability of Kronecker coefficients}
\date{July 11, 2015}

\subjclass[2010]{%
05E10.%Combinatorial aspects of representation theory
}

\author{Steven V Sam}
\address{Department of Mathematics, University of California, Berkeley, CA}
\email{\href{mailto:svs@math.berkeley.edu}{svs@math.berkeley.edu}}
\urladdr{\url{http://math.berkeley.edu/~svs/}}

\author{Andrew Snowden}
\address{Department of Mathematics, University of Michigan, Ann Arbor, MI}
\email{\href{mailto:asnowden@umich.edu}{asnowden@umich.edu}}
\urladdr{\url{http://www-personal.umich.edu/~asnowden/}}

\thanks{SS was supported by a Miller research fellowship. AS was supported by NSF grant DMS-1303082}

\begin{document}

\begin{abstract}
We prove a conjecture of Stembridge concerning stability of Kronecker coefficients that vastly generalizes Murnaghan's theorem. The main idea is to identify the sequences of Kronecker coefficients in question with Hilbert functions of modules over finitely generated algebras. The proof only uses Schur--Weyl duality and the Borel--Weil theorem and does not rely on any existing work on Kronecker coefficients.
\end{abstract}

\maketitle

\section{Introduction}

\subsection{Stembridge's conjecture}

Given a partition $\lambda$ of $n$, let $\bM_{\lambda}$ denote the associated irreducible complex representation of the symmetric group $S_n$. The important {\bf Kronecker coefficients} $g_{\lambda,\mu,\nu}$ are the tensor product multiplicities:
\begin{displaymath}
\bM_{\mu} \otimes \bM_{\nu} \cong \bigoplus_{\lambda} \bM_{\lambda}^{\oplus g_{\lambda,\mu,\nu}}.
\end{displaymath}
One can attempt to understand these coefficients by studying their limiting behavior, in various senses. An important result in this direction is Murnaghan's observation (conjectured by Murnaghan in \cite{murnaghan} and proved by Littlewood in \cite[\S 4]{littlewood}): $g_{(d)+\lambda,(d)+\mu,(d)+\nu}$ is constant for $d \gg 0$. In \cite{stembridge}, Stembridge proposes a vast generalization of this result, centered on the following concept:

\begin{definition}[Stembridge]
A triple $(\alpha, \beta, \gamma)$ of partitions with $|\alpha| = |\beta| = |\gamma|$ and $g_{\alpha, \beta, \gamma} > 0$ is {\bf stable} if, for any other triple of partitions $(\lambda, \mu, \nu)$ with $|\lambda| = |\mu| = |\nu|$, the Kronecker coefficient $g_{d\alpha+\lambda, d\beta+\mu, d\gamma+\nu}$ is constant for $d \gg 0$.
\end{definition}

With this language, Murnaghan's theorem is the statement that the triple $((1), (1), (1))$ is stable. Stembridge's proposed generalization of Murnaghan's theorem is the following conjecture (\cite[Conj~4.3]{stembridge}):

\begin{conjecture} \label{mainconj}
A triple $(\alpha, \beta, \gamma)$ is stable if and only if $g_{d\alpha, d\beta, d\gamma}=1$ for $d > 0$.
\end{conjecture}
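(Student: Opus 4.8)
The plan is to reduce the statement to commutative algebra by realizing all of the relevant Kronecker coefficients simultaneously as the Hilbert function of a single finitely generated graded domain. Fix $\alpha,\beta,\gamma$, and (when proving the ``if'' direction) also fix a test triple $\lambda,\mu,\nu$. Choose integers $p,q,r$ exceeding the numbers of parts of all the partitions involved, set $W=\mathbb{C}^{p}\otimes\mathbb{C}^{q}\otimes\mathbb{C}^{r}$ with its action of $G=\mathrm{GL}_{p}\times\mathrm{GL}_{q}\times\mathrm{GL}_{r}$, and let $R=\mathrm{Sym}(W)^{U}$, where $U\subseteq G$ is a maximal unipotent subgroup. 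By Schur--Weyl duality and the Cauchy identity, $\mathrm{Sym}^{d}(W)$ decomposes as a $G$-representation with Kronecker coefficients as multiplicities; taking $U$-invariants keeps one line from each highest-weight space, so $R$ is a $\mathbb{Z}_{\ge0}^{p}\times\mathbb{Z}_{\ge0}^{q}\times\mathbb{Z}_{\ge0}^{r}$-graded algebra (graded by dominant weights) with $R_{0}=\mathbb{C}$ and $\dim_{\mathbb{C}}R_{(\kappa,\sigma,\tau)}=g_{\kappa,\sigma,\tau}$. It is a domain, being a subalgebra of the polynomial ring $\mathrm{Sym}(W)$, and it is finitely generated because the ring of invariants of a maximal unipotent subgroup of a reductive group is finitely generated (alternatively, by viewing $R$ as a multihomogeneous coordinate ring through the Borel--Weil theorem). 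Writing $v=(\alpha,\beta,\gamma)$, the sequences in question are $d\mapsto\dim R_{dv+\chi}$ with $\chi=(\lambda,\mu,\nu)$.

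For the ``if'' direction, suppose $g_{d\alpha,d\beta,d\gamma}=1$ for $d>0$. Then $R_{v}$ contains a nonzero element $\Theta$, which is a non-zero-divisor since $R$ is a domain; multiplication by $\Theta$ is injective $R_{\eta-v}\hookrightarrow R_{\eta}$ with cokernel $(R/\Theta R)_{\eta}$, so $\dim(R/\Theta R)_{\eta}=\dim R_{\eta}-\dim R_{\eta-v}$, and in particular $(R/\Theta R)_{dv}=0$ for $d\ge1$. I would then fix $\chi$ and argue that $(R/\Theta R)_{dv+\chi}=0$ for $d\gg0$: choosing finitely many homogeneous algebra generators of the finitely generated ring $R/\Theta R$ and invoking Dickson's lemma (the product order on $\mathbb{Z}_{\ge0}^{m}$ is a well-quasi-order), if some nonzero monomial had degree $dv+\chi$ for infinitely many $d$, then two of these monomials would differ by a monomial whose degree is a positive multiple of $v$ and hence lies in $(R/\Theta R)_{\ast v}=0$, forcing the larger monomial to vanish---a contradiction. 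Thus $\dim R_{dv+\chi}=\dim R_{(d-1)v+\chi}$ for $d\gg0$, i.e.\ $g_{d\alpha+\lambda,d\beta+\mu,d\gamma+\nu}$ is eventually constant; since this holds for every $\chi$ and $g_{\alpha,\beta,\gamma}=1>0$, the triple $(\alpha,\beta,\gamma)$ is stable.

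For the ``only if'' direction, suppose $(\alpha,\beta,\gamma)$ is stable. Then $g_{\alpha,\beta,\gamma}>0$, so $R_{v}\neq0$ and $\dim R_{dv}\ge1$ for all $d$ (domain), while applying stability to the test triple $\emptyset,\emptyset,\emptyset$ shows that $\dim R_{dv}=g_{d\alpha,d\beta,d\gamma}$ is bounded. If $\dim R_{d_{0}v}\ge2$ for some $d_{0}$, choose linearly independent $f_{1},f_{2}\in R_{d_{0}v}$; then $f_{2}/f_{1}\in\mathrm{Frac}(R)$ is not in $\mathbb{C}$, hence transcendental over $\mathbb{C}$, so $f_{1}^{N},f_{1}^{N-1}f_{2},\dots,f_{2}^{N}$ are $N+1$ linearly independent elements of $R_{Nd_{0}v}$, contradicting boundedness. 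Therefore $\dim R_{dv}=1$ for all $d\ge1$, which is exactly $g_{d\alpha,d\beta,d\gamma}=1$ for $d>0$.

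The heart of the argument is the construction of $R$: the identification of the Kronecker coefficients along every family with the graded pieces of a single finitely generated domain, in which the ``diagonal'' element $\Theta$ implements the shift $d\mapsto d+1$. Once that is in place both implications are short---Dickson's lemma together with the fact that $\Theta$ is a non-zero-divisor for one direction, and the transcendence dichotomy over $\mathbb{C}$ for the other. One technical nuisance is that $R$ (hence $p,q,r$) depends on the test triple $\chi$ under consideration, but since stability is quantified over one $\chi$ at a time, this causes no trouble.
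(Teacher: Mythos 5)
Your proposal is correct, and it is a genuinely different route from the paper's. The paper builds, for each partition, a $\mathbb{Z}$-graded section ring $A_\alpha(U)=\bigoplus_d \mathbf{S}_{d\alpha}(U)$ on a flag variety via Borel--Weil, takes a Segre product over three such rings, and then takes invariants under the (reductive) stabilizer of a nondegenerate trilinear form; this isolates $B_{\alpha,\beta,\gamma}=\bigoplus_d\mathbf{K}_{d\alpha,d\beta,d\gamma}$ as a finitely generated graded domain and $N^{\lambda,\mu,\nu}_{\alpha,\beta,\gamma}$ as a finitely generated torsion-free $B$-module. The ``if'' direction is then the structure theorem for finitely generated modules over $\mathbb{C}[t]$, and the ``only if'' direction uses normality of $B$ (a graded normal domain with bounded Hilbert function and nonzero degree-one piece must be $\mathbb{C}[t]$). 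You instead package everything into one multigraded ring $R=\mathrm{Sym}(\mathbb{C}^p\otimes\mathbb{C}^q\otimes\mathbb{C}^r)^U$, whose graded pieces carry \emph{all} Kronecker coefficients at once, and finite generation comes from the Hadziev--Grosshans theorem on invariants of a maximal unipotent (which you correctly observe is morally the same Borel--Weil input, via Cox rings of triple flag products). Your ``if'' step replaces the $\mathbb{C}[t]$ structure theorem by a direct combinatorial argument: pick a nonzerodivisor $\Theta\in R_v$, observe $(R/\Theta R)_{kv}=0$ for $k\geq1$, and apply Dickson's lemma to exponent vectors of monomials in a finite generating set of $R/\Theta R$ to force $(R/\Theta R)_{dv+\chi}=0$ for $d\gg0$. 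Your ``only if'' step replaces normality by an elementary transcendence-degree argument showing that if some $\dim R_{d_0v}\geq2$ then $\dim R_{Nd_0v}$ grows without bound, contradicting stability at $\chi=0$. What the paper's route buys that yours does not is the structural statement (Theorem~\ref{mainthm}) that $N^{\lambda,\mu,\nu}_{\alpha,\beta,\gamma}$ is a finitely generated torsion-free module over $B_{\alpha,\beta,\gamma}$, which has further consequences (e.g.\ quasi-polynomiality of $d\mapsto g_{\lambda+d\alpha,\mu+d\beta,\nu+d\gamma}$ with degree controlled by Krull dimension) that a pure Dickson's-lemma argument does not readily give; what your route buys is a more self-contained proof of the conjecture itself, avoiding Segre products, the classification of $\mathbb{C}[t]$-modules, and normality/rational singularities. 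Both are valid, and the underlying geometry (flag varieties, Borel--Weil, reductivity) is essentially the same.
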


Murnaghan's theorem is an easy corollary of this conjecture since $g_{(d),(d),(d)}=1$ for all $d$: $\bM_{(d)}$ is the trivial representation of $S_d$, so $\bM_{(d)} \otimes \bM_{(d)} = \bM_{(d)}$.

Stembridge proves the ``only if'' direction of Conjecture~\ref{mainconj}. This paper proves the reverse direction, thus establishing the conjecture in full. In fact, we also reprove the ``only if'' direction, see \S\ref{sec:alg}. It may be worth emphasizing that our proof only uses Schur--Weyl duality and the Borel--Weil theorem and does not rely on any existing work on Kronecker coefficients.

\subsection{Statement of results}

For partitions $\lambda, \mu, \nu$ of $n$, let $\bK_{\lambda,\mu,\nu}$ be the $\bM_\lambda$-multiplicity space in $\bM_\mu \otimes \bM_\nu$, i.e.,
\begin{align} \label{eqn:Gspace}
\bM_\mu \otimes \bM_\nu = \bigoplus_\lambda \bK_{\lambda, \mu, \nu} \otimes \bM_\lambda.
\end{align}
This is a vector space whose dimension is the Kronecker coefficient $g_{\lambda, \mu, \nu}$. We will prove Conjecture~\ref{mainconj} by constructing some extra structure on these multiplicity spaces.

Let $\alpha$, $\beta$, $\gamma$, $\lambda$, $\mu$, $\nu$ be partitions such that $|\alpha| = |\beta| = |\gamma|$ and $|\lambda| = |\mu| = |\nu|$. Define
\begin{displaymath}
B_{\alpha,\beta,\gamma} = \bigoplus_{d \ge 0} \bK_{d\alpha,d\beta,d\gamma}, \qquad 
N_{\alpha,\beta,\gamma}^{\lambda,\mu,\nu} = \bigoplus_{d \ge 0} \bK_{d\alpha+\lambda,d\beta+\mu,d\gamma+\nu}.
\end{displaymath}
Our main result is:

\begin{theorem} \label{mainthm}
The space $B_{\alpha,\beta,\gamma}$ has the structure of a finitely generated graded integral domain, and $N^{\lambda,\mu,\nu}_{\alpha,\beta,\gamma}$ has the structure of a finitely generated torsion-free graded $B_{\alpha,\beta,\gamma}$-module.
\end{theorem}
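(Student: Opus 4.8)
The plan is to realize the spaces $\bK_{d\alpha+\lambda, d\beta+\mu, d\gamma+\nu}$ uniformly as highest-weight multiplicity spaces inside one finitely generated algebra, and then transport the ring- and module-theoretic assertions through the invariants functor of a reductive group. First fix complex vector spaces $V_1, V_2, V_3$ of dimension large enough that $\mathbf{S}_{d\alpha+\lambda}(V_1)$, $\mathbf{S}_{d\beta+\mu}(V_2)$, $\mathbf{S}_{d\gamma+\nu}(V_3)$ are nonzero for all $d \ge 0$; it suffices that $\dim V_1 \ge \max(\ell(\alpha),\ell(\lambda))$, and similarly for $V_2, V_3$, since the number of rows of $d\alpha+\lambda$ is independent of $d$. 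Put $G = \mathrm{GL}(V_1) \times \mathrm{GL}(V_2) \times \mathrm{GL}(V_3)$ and $A = \mathrm{Sym}(V_1 \otimes V_2 \otimes V_3)$, a polynomial $\mathbf{C}$-algebra on finitely many variables (hence a domain) carrying a $G$-action. Applying Schur--Weyl duality to $(V_1 \otimes V_2 \otimes V_3)^{\otimes n} = V_1^{\otimes n} \otimes V_2^{\otimes n} \otimes V_3^{\otimes n}$ and using the self-duality of the $\bM_\lambda$, the multiplicity of $\mathbf{S}_{\lambda'}(V_1) \otimes \mathbf{S}_{\mu'}(V_2) \otimes \mathbf{S}_{\nu'}(V_3)$ in $\mathrm{Sym}^n(V_1 \otimes V_2 \otimes V_3)$ is $g_{\lambda',\mu',\nu'}$ for every triple of partitions of $n$ whose Schur functors on the $V_i$ are nonzero, so there is a natural identification $\bK_{\lambda',\mu',\nu'} \cong \mathrm{Hom}_G(\mathbf{S}_{\lambda'}(V_1) \otimes \mathbf{S}_{\mu'}(V_2) \otimes \mathbf{S}_{\nu'}(V_3),\, A)$ for all such triples.

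Next, to put in the grading, I would use the Borel--Weil theorem. Writing $\mathbf{S}_\chi(V_i)^*$ for the dual Schur functor, form the graded $\mathbf{C}$-vector spaces
\[
\mathcal{B} = \bigoplus_{d\ge 0} \mathbf{S}_{d\alpha}(V_1)^* \otimes \mathbf{S}_{d\beta}(V_2)^* \otimes \mathbf{S}_{d\gamma}(V_3)^*, \qquad P = \bigoplus_{d\ge 0} \mathbf{S}_{d\alpha+\lambda}(V_1)^* \otimes \mathbf{S}_{d\beta+\mu}(V_2)^* \otimes \mathbf{S}_{d\gamma+\nu}(V_3)^*,
\]
with multiplication $\mathcal{B} \otimes \mathcal{B} \to \mathcal{B}$ and $\mathcal{B}$-action $\mathcal{B} \otimes P \to P$ given by the Cartan-product maps dual to the canonical inclusions $\mathbf{S}_{(d+e)\alpha}(V_1) \hookrightarrow \mathbf{S}_{d\alpha}(V_1) \otimes \mathbf{S}_{e\alpha}(V_1)$, $\mathbf{S}_{(d+e)\alpha+\lambda}(V_1) \hookrightarrow \mathbf{S}_{e\alpha}(V_1) \otimes \mathbf{S}_{d\alpha+\lambda}(V_1)$, and the analogues for $V_2,V_3$. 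By Borel--Weil and the K\"unneth formula, $\mathcal{B} = \bigoplus_d H^0(X, \mathcal{N}^{\otimes d})$ is the section ring of a globally generated line bundle $\mathcal{N}$ on the integral projective variety $X = \mathrm{Fl}(V_1) \times \mathrm{Fl}(V_2) \times \mathrm{Fl}(V_3)$, and $P = \bigoplus_d H^0(X, \mathcal{F} \otimes \mathcal{N}^{\otimes d})$ for a line bundle $\mathcal{F}$ on $X$; in particular the Cartan products really do define an associative multiplication and a module structure. Since $\mathcal{N}$ is semiample and $X$ is projective, $\mathcal{B}$ is a finitely generated graded $\mathbf{C}$-algebra and $P$ is a finitely generated graded $\mathcal{B}$-module; and since $X$ is integral, a product of nonzero homogeneous elements of $\mathcal{B}$, or of a nonzero homogeneous element of $\mathcal{B}$ with one of $P$, is a nonzero section of a line bundle on $X$, hence nonzero, so $\mathcal{B}$ is a domain and $P$ is torsion-free over $\mathcal{B}$.

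Now I would descend through invariants. For any finitely generated torsion-free $\mathcal{B}$-module $M$ one has a $\mathcal{B}$-linear embedding $M \hookrightarrow \mathrm{Frac}(\mathcal{B})^{\oplus r}$, and applying the exact functor $A \otimes_{\mathbf{C}} (-)$ gives an embedding $A \otimes_{\mathbf{C}} M \hookrightarrow (A \otimes_{\mathbf{C}} \mathrm{Frac}(\mathcal{B}))^{\oplus r}$ of $(A \otimes_{\mathbf{C}} \mathcal{B})$-modules; here $A \otimes_{\mathbf{C}} \mathrm{Frac}(\mathcal{B})$ is a polynomial ring over the field $\mathrm{Frac}(\mathcal{B})$, hence a domain, so $A \otimes_{\mathbf{C}} M$ is a torsion-free $(A \otimes_{\mathbf{C}} \mathcal{B})$-module. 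Taking $M = \mathcal{B}$ shows that $A \otimes_{\mathbf{C}} \mathcal{B}$ is a finitely generated $\mathbf{C}$-algebra that is a domain, with a $G$-action; by Hilbert's finiteness theorem, $B_{\alpha,\beta,\gamma} := (A \otimes_{\mathbf{C}} \mathcal{B})^G$ is a finitely generated graded $\mathbf{C}$-algebra and, being a subring of $A \otimes_{\mathbf{C}} \mathcal{B}$, a domain; its degree-$d$ piece is $\mathrm{Hom}_G(\mathbf{S}_{d\alpha}(V_1) \otimes \mathbf{S}_{d\beta}(V_2) \otimes \mathbf{S}_{d\gamma}(V_3),\, A) \cong \bK_{d\alpha,d\beta,d\gamma}$, recovering the object in the statement. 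Taking $M = P$: the module $A \otimes_{\mathbf{C}} P$ is finitely generated over $A \otimes_{\mathbf{C}} \mathcal{B}$, so by the relative form of Hilbert's theorem (applied, say, to the square-zero extension $A \otimes_{\mathbf{C}} (\mathcal{B} \oplus P)$), $N^{\lambda,\mu,\nu}_{\alpha,\beta,\gamma} := (A \otimes_{\mathbf{C}} P)^G$ is a finitely generated graded $B_{\alpha,\beta,\gamma}$-module with degree-$d$ piece $\cong \bK_{d\alpha+\lambda, d\beta+\mu, d\gamma+\nu}$; and since $N^{\lambda,\mu,\nu}_{\alpha,\beta,\gamma}$ is a $B_{\alpha,\beta,\gamma}$-submodule of $A \otimes_{\mathbf{C}} P$, which is torsion-free even over the larger ring $A \otimes_{\mathbf{C}} \mathcal{B}$, it is torsion-free over $B_{\alpha,\beta,\gamma}$.

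The step I expect to be the main obstacle is the torsion-freeness: the space $\bK_{d\alpha+\lambda,\dots}$ is only a priori a \emph{quotient} of more elementary objects (via dualized Cartan surjections onto $\mathbf{S}_{d\alpha+\lambda}(V_1)^*$ and its partners), and quotients of torsion-free modules need not be torsion-free, so the real content is to exhibit $N^{\lambda,\mu,\nu}_{\alpha,\beta,\gamma}$ instead as a \emph{sub}module of something manifestly torsion-free, which the Borel--Weil realization over the integral variety $X$ provides. A more routine point that still needs attention is that the multiplicity-space definitions of $B_{\alpha,\beta,\gamma}$ and $N^{\lambda,\mu,\nu}_{\alpha,\beta,\gamma}$ in the statement agree, as graded vector spaces carrying the relevant algebraic structure, with the constructions above; this uses Schur--Weyl duality, self-duality of the $\bM_\lambda$, and the identification of multiplication of Borel--Weil sections with Cartan multiplication.
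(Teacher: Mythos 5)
Your proof is correct, and it takes a genuinely different (though closely related) route from the paper. Where the paper realizes $\bK_{\lambda,\mu,\nu}$ as $(\bS_\lambda(U)\otimes\bS_\mu(V)\otimes\bS_\nu(W))^{G(\omega)}$ for $U=V^*\otimes W^*$ and $G(\omega)\cong\GL(V)\times\GL(W)$ the stabilizer of a non-degenerate trilinear form, you instead realize it as $\Hom_G\bigl(\bS_\lambda(V_1)\otimes\bS_\mu(V_2)\otimes\bS_\nu(V_3),\ \Sym(V_1\otimes V_2\otimes V_3)\bigr)$ with $G=\GL(V_1)\times\GL(V_2)\times\GL(V_3)$, i.e.\ you trade the constrained stabilizer group for the full product group at the cost of tensoring with the auxiliary algebra $A=\Sym(V_1\otimes V_2\otimes V_3)$. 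This removes the constraint $\dim U=\dim V\cdot\dim W$ and treats the three factors symmetrically. You also build the graded algebra $\mathcal{B}$ and module $P$ directly as section rings on the triple product of flag varieties rather than via the Segre product of three one-flag-variety section rings; the paper notes exactly this alternative in a remark after the proof. The remaining ingredients are identical: Borel--Weil to identify the graded pieces and get integrality/torsion-freeness from integrality of the base variety, and Hilbert's finiteness theorem (Theorem~\ref{thm:GIT}, both parts) to descend finite generation through $G$-invariants. One minor point worth tightening: your assertion that $P$ is a finitely generated $\mathcal{B}$-module ``since $\mathcal{N}$ is semiample and $X$ is projective'' is true but a little indirect as stated (semiampleness alone needs a Stein-factorization argument plus the projection formula); the cleanest version, and the one the paper uses in its $A_\alpha$/$M_{\alpha,\lambda}$ construction, is to invoke Cartan surjectivity directly: each multiplication $\mathcal{B}_1\otimes P_d\to P_{d+1}$ is a nonzero $G$-equivariant map onto an irreducible, hence onto, so $P$ is generated in degree $0$. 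Finally, your observation that the essential difficulty is to realize the multiplicity spaces as a \emph{sub}object of something manifestly torsion-free (rather than as a quotient) is exactly the right way to see why Borel--Weil, and not just Schur--Weyl, is needed; the paper's torsion-freeness argument has the same shape.
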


\begin{corollary} \label{maincor}
Conjecture~\ref{mainconj} is true.
\end{corollary}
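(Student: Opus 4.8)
The plan is to derive both implications of Conjecture~\ref{mainconj} from Theorem~\ref{mainthm} using only elementary commutative algebra. Abbreviate $B=B_{\alpha,\beta,\gamma}$ and $N=N^{\lambda,\mu,\nu}_{\alpha,\beta,\gamma}$. By~\eqref{eqn:Gspace} the Hilbert function of the graded ring $B$ is $d\mapsto g_{d\alpha,d\beta,d\gamma}$, and the Hilbert function of the graded $B$-module $N$ is $d\mapsto g_{d\alpha+\lambda,d\beta+\mu,d\gamma+\nu}$, so all of the conjecture is a statement about these Hilbert functions. I will use throughout that $g_{0,0,0}=1$ (the trivial representation of $S_0$) and that a stable triple satisfies $g_{\alpha,\beta,\gamma}>0$ by definition.

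For the ``if'' direction I would argue as follows. Suppose $g_{d\alpha,d\beta,d\gamma}=1$ for all $d>0$; together with $d=0$ this says $B$ has Hilbert function identically $1$. Choosing a nonzero $t\in B_1$, the domain hypothesis gives $0\neq t^{d}\in B_d$ for every $d$, and since $\dim_{\bC}B_d=1$ this forces $B=\bC[t]$, a polynomial ring in one variable placed in degree $1$. By Theorem~\ref{mainthm}, $N$ is a finitely generated torsion-free graded module over the principal ideal domain $\bC[t]$, hence free: $N\cong\bigoplus_{i=1}^{r}\bC[t](-a_i)$. Therefore $\dim_{\bC}N_d=\#\{\,i:a_i\le d\,\}$, which equals $r$ once $d$ is large; that is, $g_{d\alpha+\lambda,d\beta+\mu,d\gamma+\nu}$ is eventually constant in $d$. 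Since $(\lambda,\mu,\nu)$ ranges over all triples and $g_{\alpha,\beta,\gamma}=1>0$, this is precisely the assertion that $(\alpha,\beta,\gamma)$ is stable.

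For the ``only if'' direction, suppose $(\alpha,\beta,\gamma)$ is stable. Since $g_{\alpha,\beta,\gamma}>0$ we have $B_1\neq 0$, so $g_{d\alpha,d\beta,d\gamma}=\dim_{\bC}B_d\ge 1$ for all $d$ (using $t^{d}\neq 0$). On the other hand, stability applied with $(\lambda,\mu,\nu)=(2\alpha,2\beta,2\gamma)$ shows that $m\mapsto g_{m\alpha,m\beta,m\gamma}$ is eventually constant, hence bounded. The key point is then that a finitely generated graded domain over the algebraically closed field $\bC$ whose $d$-th piece has dimension at least $2$ has unbounded Hilbert function: if $x,y\in B_d$ are linearly independent, then $x/y\in\operatorname{Frac}(B)$ is transcendental over $\bC$ (if it were algebraic it would lie in $\bC$, contradicting independence), so $x^{e},x^{e-1}y,\dots,y^{e}$ are $e+1$ linearly independent elements of $B_{de}$ for every $e$. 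This contradicts boundedness, so $\dim_{\bC}B_d\le 1$ for all $d$, and combined with the lower bound we get $g_{d\alpha,d\beta,d\gamma}=1$ for all $d>0$.

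The corollary is thus a light consequence of Theorem~\ref{mainthm}; the substantive content --- and the real obstacle --- lies entirely in that theorem, namely in constructing the multiplication on $B_{\alpha,\beta,\gamma}$ and the module action on $N^{\lambda,\mu,\nu}_{\alpha,\beta,\gamma}$ and in proving finite generation, integrality, and torsion-freeness. Within the derivation of the corollary itself the only non-formal step is the unbounded-growth argument just given, which is standard.
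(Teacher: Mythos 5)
Your argument for the ``if'' direction is essentially the paper's own: reduce to $B\cong\bC[t]$, apply the structure theorem for finitely generated modules over a PID to the torsion-free module $N$, and read off eventual constancy of the Hilbert function. For the ``only if'' direction, however, you take a genuinely different and more elementary route than the paper does. The paper's proof of the corollary proper does not address ``only if'' at all --- it invokes Stembridge's result --- and the paper separately reproves ``only if'' in \S\ref{sec:alg} by establishing that $B_{\alpha,\beta,\gamma}$ has rational singularities (via Boutot's theorem applied to a homogeneous coordinate ring), hence is normal, and then quoting the fact that a finitely generated normal graded domain with bounded Hilbert function and $B_1\neq 0$ is $\bC[t]$. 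Your argument avoids normality and geometry entirely: you observe that two linearly independent elements $x,y\in B_d$ in a graded domain with $B_0=\bC$ give a transcendental $x/y\in\operatorname{Frac}(B)$ (here $\bC$ algebraically closed is used), so the monomials $x^iy^{e-i}$ are independent in $B_{de}$, forcing unbounded growth. This is a clean, self-contained replacement for the normality input, at the cost of not recording the stronger algebraic properties of $B$ that the paper wants for other purposes. One inessential quibble: you take $(\lambda,\mu,\nu)=(2\alpha,2\beta,2\gamma)$ to get boundedness of $g_{m\alpha,m\beta,m\gamma}$, but stability applied to the empty triple already gives this directly, as the paper does. Everything checks out.
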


\begin{proof}
Suppose $g_{d\alpha, d\beta, d\gamma} = 1$ for all $d > 0$. Then $B_{\alpha, \beta, \gamma}$ is isomorphic to $\bC[t]$, where $t$ has degree one. It now follows from the structure theorem for finitely generated $\bC[t]$-modules that $N_{\alpha,\beta,\gamma}^{\lambda,\mu,\nu}$ is isomorphic to $\bigoplus_{i=1}^n B_{\alpha,\beta,\gamma}[r_i]$ for some $r_1, \ldots, r_n$, where $[r_i]$ denotes a shift in grading. We thus see that $g_{\lambda + d\alpha, \mu + d\beta, \nu + d\gamma}=n$ for $d \ge \max(r_1, \ldots, r_n)$.
\end{proof}

\begin{corollary}
Suppose $g_{\alpha,\beta,\gamma} \ne 0$, and let $r$ be the Krull dimension of $B_{\alpha, \beta, \gamma}$. Then $g_{\lambda + d\alpha, \mu + d\beta, \nu + d\gamma} \sim A d^{r-1}$ for a constant $A$ (depending on $\lambda,\mu,\nu$). More precisely, $g_{\lambda + d\alpha, \mu + d\beta, \nu + d\gamma}$ is a quasi-polynomial of degree $r-1$ with constant leading term for $d \gg 0$.
\end{corollary}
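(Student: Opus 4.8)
The plan is to read off the asymptotics from Theorem~\ref{mainthm} using two standard inputs. The first is the Hilbert--Serre theorem in its quasi-polynomial form: if $A$ is a finitely generated graded $\bC$-algebra with $A_0 = \bC$ (not necessarily generated in degree one) and $M$ is a finitely generated graded $A$-module, then $d \mapsto \dim_\bC M_d$ agrees for $d \gg 0$ with a quasi-polynomial of degree exactly $\dim_A M - 1$ (see e.g.\ Bruns--Herzog, \emph{Cohen--Macaulay Rings}, or Stanley, \emph{Combinatorics and Commutative Algebra}). The second is the elementary observation, proved below, that a non-decreasing sequence which is eventually quasi-polynomial must have a constant leading coefficient.

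Write $B = B_{\alpha,\beta,\gamma}$ and $N = N^{\lambda,\mu,\nu}_{\alpha,\beta,\gamma}$. First I would note that the degree-zero piece of $B$ is $\bK_{\varnothing,\varnothing,\varnothing}$, which is one-dimensional, so $B_0 = \bC$ and the Hilbert--Serre statement applies to $N$ over $B$ by Theorem~\ref{mainthm}. The hypothesis $g_{\alpha,\beta,\gamma}\neq 0$ says precisely that $B_1 = \bK_{\alpha,\beta,\gamma} \neq 0$; fix $0 \neq t \in B_1$. Since $B$ is a domain, the powers $1, t, t^2, \dots$ are nonzero and lie in distinct degrees, so $\bC[t] \hookrightarrow B$ and $r = \dim B \ge 1$. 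If $N = 0$ then every Kronecker coefficient in sight vanishes and the claim holds degenerately, so assume $N \neq 0$; as $N$ is torsion-free over the domain $B$ and nonzero, its annihilator in $B$ is the zero ideal, whence $\dim_B N = \dim B = r$. By Hilbert--Serre, $g_{\lambda + d\alpha, \mu + d\beta, \nu + d\gamma} = \dim_\bC N_d$ is, for $d \gg 0$, a quasi-polynomial of degree $r - 1$.

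For the constant leading term I would use torsion-freeness once more: since $t \neq 0$, multiplication by $t$ is injective on $N$, so $N_d \hookrightarrow N_{d+1}$ and $\dim_\bC N_d$ is non-decreasing. Now suppose $f(0), f(1), f(2), \dots$ is a non-decreasing sequence of non-negative integers that, for $d \gg 0$, equals a quasi-polynomial of period $\pi$ and degree $e$. For each residue $j \in \{0,\dots,\pi-1\}$ the restriction $k \mapsto f(\pi k + j)$ is, for $k \gg 0$, a polynomial $p_j(k)$ whose leading coefficient (in $k$) is some $\ell_j \ge 0$, and the degree-$e$ coefficient of the quasi-polynomial on the class $d \equiv j$ equals $\ell_j/\pi^e$. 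Because $f$ is non-decreasing, $p_0(k) \le p_1(k) \le \dots \le p_{\pi-1}(k) \le p_0(k+1)$ for $k \gg 0$; comparing leading coefficients of these polynomials (and noting $p_0(k+1)$ has the same leading coefficient $\ell_0$ as $p_0(k)$) forces $\ell_0 \le \ell_1 \le \dots \le \ell_{\pi-1} \le \ell_0$, so all the $\ell_j$ coincide. Applying this to $f(d) = \dim_\bC N_d$ shows the degree-$(r-1)$ coefficient of its Hilbert quasi-polynomial is a single constant $A$, and therefore $g_{\lambda + d\alpha, \mu + d\beta, \nu + d\gamma} \sim A d^{r-1}$. (For $r=1$ this just says the coefficients are eventually constant, consistent with Corollary~\ref{maincor}.)

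The only nontrivial step is the interleaving argument in the previous paragraph: monotonicity is exactly what rules out a genuinely periodic leading term, and without the degree-one element $t$ it really can fail (for $B = \bC[t]$ with $\deg t = 2$ the Hilbert function is $1,0,1,0,\dots$). I expect that, together with checking the hypotheses of the quasi-polynomial Hilbert--Serre theorem --- principally $B_0 = \bC$ and the finite generation supplied by Theorem~\ref{mainthm}, plus the observation that torsion-freeness over a domain forces full support so that the degree is exactly $r-1$ --- to be where essentially all the care is needed; the rest is immediate.
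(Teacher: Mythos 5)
Your proof is correct, and it arrives at the conclusion by a route that is different in mechanism from the paper's, even though the two share the same key inputs (the nonzero degree-one element $x\in B_1=\bK_{\alpha,\beta,\gamma}$ and the torsion-freeness of $N$ over the domain $B$, both from Theorem~\ref{mainthm}). The paper works at the level of Hilbert \emph{series}: injectivity of multiplication by $x$ gives the factorization $\rH_N(t)=(1-t)^{-1}\rH_{N/xN}(t)$, the Krull dimension of $N/xN$ is $r-1$ so all its poles have order $\le r-1$, and then Stanley's Theorem~4.1.1(iii) (the pole at $t=1$ strictly dominates) yields the quasi-polynomial of degree $r-1$ with constant leading term. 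You instead work at the level of the Hilbert \emph{function}: you invoke the quasi-polynomial Hilbert--Serre theorem to get a quasi-polynomial of degree $\dim_B N-1 = r-1$ (using that $\operatorname{ann}_B N=0$ since $N$ is nonzero and torsion-free over the domain $B$), and then use injectivity of $x$ to get that $d\mapsto\dim N_d$ is non-decreasing, so the interleaving argument $p_0(k)\le p_1(k)\le\dots\le p_{\pi-1}(k)\le p_0(k+1)$ forces all the residue-class leading coefficients $\ell_j$ to coincide. Your interleaving lemma is a nice, elementary, self-contained substitute for the pole-order analysis plus Stanley~4.1.1(iii); what it trades away is that the paper's version makes visible \emph{why} the constancy holds in terms of the generating function (the dominant pole is at $t=1$), which is the structural fact Stanley's theorem encapsulates. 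One small caveat: in the degenerate case $N=0$ you say the claim holds ``degenerately,'' but strictly speaking the zero function is not a quasi-polynomial of degree $r-1\ge 0$; this is a cosmetic issue with the corollary's statement shared by the paper's proof, not a defect of your argument, but it would be cleaner to note explicitly that the statement is to be read as vacuous (or that $N\ne 0$ is tacitly assumed) in that case.
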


\begin{proof}
Let $N=N_{\alpha,\beta,\gamma}^{\lambda,\mu,\nu}$. The Hilbert series $\rH_N(t)$ has a pole of order $r$ at $t=1$, by standard properties of Krull dimension. Let $x$ be a non-zero degree one element of $B_{\alpha,\beta,\gamma}$. Since $N$ is torsion free, $\rH_N(t)=(1-t)^{-1} \rH_{N/xN}(t)$. As $N/xN$ has Krull dimension $r-1$, all poles of $\rH_{N/xN}(t)$ have order $\le r-1$. Thus $\rH_N(t)$ has a pole of order $r$ at $t=1$ and all other poles have order $\le r-1$ (and are at roots of unity). The corollary now follows from \cite[Theorem~4.1.1(iii)]{stanley}, and the fact that the $d$th coefficient of $\rH_N(t)$ is $g_{\lambda + d\alpha, \mu + d\beta, \nu + d\gamma}$.
\end{proof}

\begin{remark}
We will see in \S\ref{sec:alg} that the ring $B_{\alpha,\beta,\gamma}$ is normal and has rational singularities. We omitted this from the main result for simplicity and because it is not strictly needed for the application to Conjecture~\ref{mainconj}. However, normality can be used to prove the ``only if'' direction of Conjecture~\ref{mainconj}, as we will explain.
\end{remark}

\subsection{Related work} \label{ss:related-work}

A general result of Meinrenken and Sjamaar \cite[Corollary 2.12]{MS} implies that the function $d \mapsto g_{d\alpha, d\beta, d\gamma}$ is a quasi-polynomial for all $d \ge 0$. However, the same strengthening can fail for the general functions $d \mapsto g_{\lambda + d\alpha, \mu + d\beta, \nu + d\gamma}$.

Vallejo introduces a notion of {\it additive stability} in \cite{vallejo} and proves that it implies stability in Stembridge's sense. Additive stability is provided by the existence of a certain additive matrix, and hence is easier to apply, but it is less general \cite[Example 6.3]{vallejo}.

Pak and Panova show that for any $k \ge 1$, the triple $((1^k), (1^k), (k))$ is stable \cite[Theorem 1.1]{pak-panova}. This is also a special case of Vallejo's work just mentioned and Stembridge's result that $((k), \alpha, \alpha)$ is stable for any partition $\alpha$ of $k$ \cite[Example 6.3]{stembridge}.

Manivel uses geometric techniques in \cite{manivel1, manivel2} to produce many more examples of stable triples and to study the cone of stable triples.

Finally, see \cite[\S 4.5]{BV} for some general information about the degrees and leading coefficients of the Hilbert functions $d \mapsto g_{\lambda + d\alpha, \mu + d\beta, \nu + d\gamma}$.

\subsection{Outline of paper}

In \S\ref{sec:bg} we recall some background. The proof of Theorem~\ref{mainthm} is given in \S\ref{sec:proof}. Finally, \S\ref{sec:rmk} contains remarks and related results, including a Littlewood--Richardson version and a plethysm version of Theorem~\ref{mainthm}.

\subsection*{Acknowledgements}

We thank Greta Panova, Mateusz Micha{\l}ek, John Stembridge, and Ernesto Vallejo for helpful comments and references.

\section{Background} \label{sec:bg}

\subsection{Schur functors and Kronecker coefficients}

We refer to \cite[Part 1]{expos} for further explanation and references for this section.

For a partition $\lambda$ of $n$ define the Schur functor $\bS_{\lambda}$ by
\begin{displaymath}
\bS_{\lambda}(V)=\Hom_{S_n}(\bM_{\lambda}, V^{\otimes n})
\end{displaymath}
where $V$ is a complex vector space. We recall the well-known connection between Schur functors and Kronecker coefficients:

\begin{proposition} \label{prop:SW}
We have a natural identification
\begin{displaymath}
\bS_{\lambda}(V \otimes W) = \bigoplus_{\mu,\nu} \bK_{\lambda,\mu,\nu} \otimes \bS_{\mu}(V) \otimes \bS_{\nu}(W).
\end{displaymath}
\end{proposition}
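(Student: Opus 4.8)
The plan is to unwind the definition $\bS_\lambda(U) = \Hom_{S_n}(\bM_\lambda, U^{\otimes n})$ and reduce the claim to the isotypic decomposition of a tensor power over the (semisimple) group algebra $\bC[S_n]$.

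First I would record the natural isomorphism of $S_n$-representations $(V \otimes W)^{\otimes n} \cong V^{\otimes n} \otimes W^{\otimes n}$, where $S_n$ acts diagonally on the right-hand side; this is just a permutation of tensor factors, and since it commutes with the actions of $\GL(V)$ and $\GL(W)$ it is natural in $V$ and $W$. Next, for any finite-dimensional $S_n$-module $X$, semisimplicity of $\bC[S_n]$ together with Schur's lemma gives the canonical isotypic decomposition $X = \bigoplus_\mu \bM_\mu \otimes \Hom_{S_n}(\bM_\mu, X)$, the sum being over partitions $\mu$ of $n$. Applying this to $X = V^{\otimes n}$ gives $V^{\otimes n} = \bigoplus_\mu \bM_\mu \otimes \bS_\mu(V)$ by the definition of the Schur functor, and similarly for $W$. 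Tensoring these two decompositions together and using the previous step yields
\[
(V \otimes W)^{\otimes n} \cong \bigoplus_{\mu,\nu} (\bM_\mu \otimes \bM_\nu) \otimes \bS_\mu(V) \otimes \bS_\nu(W)
\]
as representations of $S_n \times \GL(V) \times \GL(W)$, where $S_n$ acts diagonally on the $\bM_\mu \otimes \bM_\nu$ factor and trivially on the $\bS_\mu(V) \otimes \bS_\nu(W)$ factor.

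Finally I would apply the functor $\Hom_{S_n}(\bM_\lambda, -)$ to both sides. The left-hand side becomes $\bS_\lambda(V \otimes W)$ by definition; on the right-hand side the functor passes through the $\GL$-factors, which carry no $S_n$-action, to give $\bigoplus_{\mu,\nu} \Hom_{S_n}(\bM_\lambda, \bM_\mu \otimes \bM_\nu) \otimes \bS_\mu(V) \otimes \bS_\nu(W)$, and $\Hom_{S_n}(\bM_\lambda, \bM_\mu \otimes \bM_\nu) = \bK_{\lambda,\mu,\nu}$ by the defining decomposition~\eqref{eqn:Gspace} and Schur's lemma. This is exactly the asserted identity, and it is natural in $V$ and $W$ because each step is.

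There is no serious obstacle: the argument is entirely formal once one has semisimplicity of $\bC[S_n]$ in hand. The only point that calls for a little care is the bookkeeping of the group actions — verifying that the diagonal $S_n$-action and the $\GL(V) \times \GL(W)$-action interact correctly under the rearrangement $(V \otimes W)^{\otimes n} \cong V^{\otimes n} \otimes W^{\otimes n}$, and that the multiplicity spaces $\bS_\mu(V)$ and $\bS_\nu(W)$ are left untouched by $\Hom_{S_n}(\bM_\lambda, -)$ — but neither is difficult.
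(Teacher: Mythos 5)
Your proposal is correct and follows essentially the same route as the paper: both decompose $V^{\otimes n}$ and $W^{\otimes n}$ via Schur--Weyl duality, tensor them, and then extract the $\bM_\lambda$-multiplicity space (the paper first rewrites $\bM_\mu \otimes \bM_\nu$ using~\eqref{eqn:Gspace} and then reads off the isotypic component, while you apply $\Hom_{S_n}(\bM_\lambda, -)$ directly, but these are the same step).
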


\begin{proof}
We have decompositions
\begin{displaymath}
V^{\otimes n}=\bigoplus_{\mu} \bS_{\mu}(V) \otimes \bM_{\mu}, \qquad
W^{\otimes n}=\bigoplus_{\nu} \bS_{\nu}(W) \otimes \bM_{\nu}.
\end{displaymath}
Tensoring these together, and using the decomposition \eqref{eqn:Gspace}, we find
\[
(V \otimes W)^{\otimes n} = \bigoplus_{\lambda,\mu,\nu} \bK_{\lambda, \mu, \nu} \otimes \bS_{\mu}(V) \otimes \bS_{\nu}(W) \otimes \bM_{\lambda}.
\]
Taking the $\bM_{\lambda}$ isotypic component yields the stated result.
\end{proof}

Recall that $\bS_{\lambda}(V)$ is a nonzero irreducible representation of $\GL(V)$ if $\dim(V) \ge \ell(\lambda)$, and $0$ otherwise. Thus, assuming $\dim(V) \ge \ell(\lambda)$, we have a  natural isomorphism $\bC=(\bS_{\lambda}(V) \otimes \bS_{\lambda}(V^*))^{\GL(V)}$. Proposition~\ref{prop:SW} therefore gives a natural isomorphism
\begin{equation} \label{eq1}
\bK_{\lambda,\mu,\nu} = (\bS_{\lambda}(V^* \otimes W^*) \otimes \bS_{\mu}(V) \otimes \bS_{\nu}(W))^{\GL(V) \times \GL(W)},
\end{equation}
assuming $\dim(V) \ge \ell(\mu)$ and $\dim(W) \ge \ell(\nu)$.

We now recast \eqref{eq1} so that the right side reflects the symmetry of the left, at least superficially. Let $U$, $V$, and $W$ be finite dimensional vector spaces and let $\omega \colon U \times V \times W \to \bC$ be a trilinear form. Assume that $\omega$ is {\bf non-degenerate} in the sense that it induces an isomorphism $U \to V^* \otimes W^*$. Note that this implies that $\dim(U) = \dim(V) \dim(W)$. We let $G(\omega) \subset \GL(U) \times \GL(V) \times \GL(W)$ be the stabilizer of $\omega$; this projects isomorphically to $\GL(V) \times \GL(W)$. We can restate \eqref{eq1} as:

\begin{proposition}
\label{schur-kron}
Let $\omega \colon U \times V \times W \to \bC$ be a non-degenerate trilinear form and assume that $\dim(U) \ge \ell(\lambda)$, $\dim(V) \ge \ell(\mu)$, and $\dim(W) \ge \ell(\nu)$. Then we have a natural isomorphism
\begin{displaymath}
\bK_{\lambda,\mu,\nu} = (\bS_{\lambda}(U) \otimes \bS_{\mu}(V) \otimes \bS_{\nu}(W))^{G(\omega)}.
\end{displaymath}
\end{proposition}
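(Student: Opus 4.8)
The goal is to restate \eqref{eq1} in the symmetric guise of Proposition~\ref{schur-kron}, so the plan is essentially a translation argument: show that the data of a non-degenerate trilinear form $\omega$ is equivalent to the data used in \eqref{eq1}, and that $G(\omega)$ corresponds to $\GL(V) \times \GL(W)$ under this equivalence. Concretely, non-degeneracy of $\omega$ means exactly that the induced map $U \to V^* \otimes W^*$ is an isomorphism; I would use it to identify $\bS_\lambda(U)$ with $\bS_\lambda(V^* \otimes W^*)$, after which \eqref{eq1} reads
\begin{displaymath}
\bK_{\lambda,\mu,\nu} = (\bS_\lambda(U) \otimes \bS_\mu(V) \otimes \bS_\nu(W))^{\GL(V) \times \GL(W)},
\end{displaymath}
where $\GL(V) \times \GL(W)$ acts on $\bS_\lambda(U)$ through the isomorphism $U \cong V^* \otimes W^*$ and the natural action on $V^* \otimes W^*$. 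It then remains to observe that this induced action on $U$ is precisely the one coming from the projection $G(\omega) \xrightarrow{\sim} \GL(V) \times \GL(W)$, so the invariants agree.

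First I would make the dimension bookkeeping explicit: since $\omega$ is non-degenerate, $\dim(U) = \dim(V)\dim(W) \ge \ell(\mu)\ell(\nu) \ge \max(\ell(\mu),\ell(\nu))$, but the hypothesis $\dim(U) \ge \ell(\lambda)$ is what we actually need, together with $\dim(V) \ge \ell(\mu)$ and $\dim(W) \ge \ell(\nu)$, in order to invoke \eqref{eq1}. Then I would record the claim that $G(\omega)$, the stabilizer of $\omega$ in $\GL(U) \times \GL(V) \times \GL(W)$, projects isomorphically onto $\GL(V) \times \GL(W)$: given $(g,h) \in \GL(V) \times \GL(W)$, the pair acts on $V^* \otimes W^*$, hence on $U$ via the fixed isomorphism $U \cong V^* \otimes W^*$, and one checks that the resulting triple stabilizes $\omega$; conversely an element of $G(\omega)$ is determined by its $\GL(V) \times \GL(W)$ component because the $\GL(U)$ component is forced to be the dual-tensor action. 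This is a short linear-algebra verification.

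With that isomorphism $G(\omega) \xrightarrow{\sim} \GL(V) \times \GL(W)$ in hand, the two $G(\omega)$-representations $\bS_\lambda(U)$ (via the first factor) and $\bS_\lambda(V^* \otimes W^*)$ (via the second and third) are identified through the functoriality of the Schur functor $\bS_\lambda$ applied to the $\omega$-induced isomorphism $U \cong V^* \otimes W^*$. Taking the tensor product with $\bS_\mu(V) \otimes \bS_\nu(W)$ and passing to $G(\omega)$-invariants — which equals $\GL(V) \times \GL(W)$-invariants under the identification — turns \eqref{eq1} into the desired formula. Naturality is inherited from the naturality already asserted in \eqref{eq1} and in Proposition~\ref{prop:SW}, together with the functoriality of $\bS_\lambda$.

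I do not anticipate a serious obstacle here: the content is entirely the observation that ``non-degenerate trilinear form'' packages the isomorphism $U \cong V^* \otimes W^*$ in a symmetric way, and that the invariant-theoretic statement is unchanged by transporting the group along an isomorphism. The only point requiring a little care is confirming that the projection $G(\omega) \to \GL(V) \times \GL(W)$ is genuinely an isomorphism — injectivity because the $\GL(U)$-coordinate is determined, surjectivity because every $(g,h)$ lifts — and that the lifted $\GL(U)$-action on $U$ agrees with the tensor action on $V^* \otimes W^*$ used in \eqref{eq1}; both follow immediately from unwinding the definition of $\omega$ being stabilized.
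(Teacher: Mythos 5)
Your proposal is correct and follows exactly the paper's intended route: the paper presents Proposition~\ref{schur-kron} as a direct restatement of \eqref{eq1} via the isomorphism $U \cong V^* \otimes W^*$ induced by $\omega$ and the isomorphism $G(\omega) \cong \GL(V) \times \GL(W)$ given by projection, and does not supply further argument. Your write-up simply makes explicit the transport-of-structure step that the paper leaves implicit.
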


\subsection{Invariant theory and Segre products}

\begin{theorem} \label{thm:GIT}
Let $G$ be a complex reductive group acting on a finitely generated $\bC$-algebra $A$ and also compatibly on a finitely generated $A$-module $M$, i.e., the multiplication map $A \otimes M \to M$ is $G$-invariant. 
\begin{enumerate}[\indent \rm (a)]
\item The ring of invariants $A^G$ is finitely generated.
\item $M^G$ is a finitely generated $A^G$-module.
\end{enumerate}
\end{theorem}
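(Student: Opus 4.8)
The plan is to prove this in two stages: part (a) is the classical Hilbert finiteness theorem for rings of invariants, and part (b) can be bootstrapped from it by a grading trick. Throughout I take the $G$-actions to be rational, as is implicit in the statement, so that $A$ and $M$ are locally finite $G$-representations. For part (a), I would begin with the Reynolds operator: since $G$ is reductive, every rational $G$-representation is semisimple, so $A$ decomposes $G$-equivariantly as $A^G \oplus A'$, where $A'$ is the sum of the nontrivial isotypic components, and the resulting projection $R \colon A \to A^G$ is a homomorphism of $A^G$-modules. Next I would reduce to a linear action on a polynomial ring: choose a finite-dimensional $G$-stable subspace $V \subset A$ generating $A$ as a $\bC$-algebra (possible by local finiteness), so that $P := \operatorname{Sym}(V)$ admits a $G$-equivariant surjection $P \twoheadrightarrow A$; since the Reynolds operator is compatible with equivariant maps, this restricts to a surjection $P^G \twoheadrightarrow A^G$, and it suffices to show $P^G$ is finitely generated, where $G$ now acts on the polynomial ring $P$ by graded automorphisms. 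For this, let $I \subset P$ be the ideal generated by all homogeneous invariants of positive degree; by the Hilbert basis theorem $I = (f_1, \dots, f_r)$ for finitely many homogeneous $f_i \in P^G$ of positive degree. I claim $P^G = \bC[f_1, \dots, f_r]$, by induction on degree: a homogeneous invariant $f$ of degree $d > 0$ can be written $f = \sum_i a_i f_i$ with each $a_i \in P$ homogeneous of degree $d - \deg f_i < d$; applying $R$, which is $P^G$-linear and fixes $f$ and the $f_i$, gives $f = R(f) = \sum_i R(a_i) f_i$, and each $R(a_i)$ is a homogeneous invariant of degree $< d$, hence lies in $\bC[f_1, \dots, f_r]$ by induction.

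For part (b), the idea is to repackage the module as an algebra. Form the symmetric algebra $S := \operatorname{Sym}_A(M) = \bigoplus_{d \ge 0} \operatorname{Sym}^d_A(M)$. Since $A$ is a finitely generated $\bC$-algebra and $M$ is a finitely generated $A$-module, $S$ is a finitely generated $\bC$-algebra, and the compatible $G$-action on the pair $(A, M)$ extends to a rational $G$-action on $S$ respecting the grading. By part (a), $S^G = \bigoplus_{d \ge 0} (\operatorname{Sym}^d_A M)^G$ is a finitely generated $\bC$-algebra, and it is graded with degree-zero part $A^G$ and degree-one part $M^G$. Now any finitely generated graded $\bC$-algebra is generated by finitely many homogeneous elements --- the homogeneous components of a finite generating set --- so its degree-zero subalgebra is finitely generated and each graded piece is a finitely generated module over it, being spanned by the finitely many monomials in the positive-degree generators of the relevant total degree. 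Applying this to $S^G$ shows that $M^G$, its degree-one part, is a finitely generated $A^G$-module.

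I expect the main obstacle to lie entirely within part (a): the classical argument marrying Noetherianity (via the Hilbert basis theorem) to the Reynolds operator, together with the reduction to a linear action, which is the only place where reductivity and rationality of the action genuinely enter. Once (a) is in hand, part (b) is a short formal argument; the only points to check there are that $\operatorname{Sym}_A(M)$ really is a finitely generated $\bC$-algebra and that taking $G$-invariants is compatible with the grading, both of which are immediate.
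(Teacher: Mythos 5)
The paper does not actually prove Theorem~\ref{thm:GIT}; it simply cites \cite[Theorems~3.6 and~3.25]{PV}. Your proposal supplies a genuine proof, and it is correct. Part (a) is the classical Hilbert argument in exactly its standard form: local finiteness lets you reduce to a linearizable action on $\Sym(V)$, the surjection $\Sym(V)^G \twoheadrightarrow A^G$ follows because the Reynolds operator intertwines with the equivariant quotient map, and then Noetherianity of the ideal of positive-degree invariants together with the Reynolds operator being $\Sym(V)^G$-linear closes the induction. Part (b) via $S = \Sym_A(M)$ is also a standard device: $S$ is finitely generated over $\bC$, the $G$-action on $S$ is rational and graded, so $S^G$ is a finitely generated graded $\bC$-algebra with $(S^G)_0 = A^G$ and $(S^G)_1 = M^G$, and picking homogeneous generators shows the degree-one piece is a finite $A^G$-module. (Some treatments instead use the square-zero extension $A\oplus M$, but that is essentially the same trick with the same grading argument, and your version is if anything cleaner.) The only points I would make explicit if this were to replace the citation: (i) the paper's phrase ``$G$-invariant'' for the multiplication map should be read as $G$-equivariant, as you correctly did; and (ii) in the reduction step of (a) one should note that the ideal of invariants of positive degree sits inside the irrelevant ideal of $\Sym(V)$, so that the homogeneous generators $f_i$ genuinely have positive degree --- you state this but it is worth keeping visible, since the induction needs $\deg f_i > 0$. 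Neither of these is a gap; your argument is sound.
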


\begin{proof}
(a) See \cite[Theorem~3.6]{PV}. (b) See \cite[Theorem~3.25]{PV}.
\end{proof}

Let $V$ and $W$ be graded vector spaces. We define the {\bf Segre product} of $V$ and $W$ by
\begin{displaymath}
V \boxtimes W = \bigoplus_{d \ge 0} V_d \otimes W_d.
\end{displaymath}
This has the following interpretation in terms of invariant theory. The gradings on $V$ and $W$ are equivalent to algebraic $\bC^*$ actions. Thus $V \otimes W$ is naturally a representation of $(\bC^*)^2$, and $V \boxtimes W$ is the invariants under the diagonal subgroup $\{(a, a^{-1}) \mid a \in \bC^*\} \cong \bC^*$. From this, we get the following corollary.

\begin{corollary} \phantomsection \label{cor:segre} 
\begin{enumerate}[\indent \rm (a)]
\item If $A$ and $B$ are graded $\bC$-algebras then $A \boxtimes B$ is naturally a graded $\bC$-algebra. If $A$ and $B$ are finitely generated then so is $A \boxtimes B$. If in addition $A$ and $B$ are domains, then so is $A \boxtimes B$.

\item If $M$ and $N$ are graded $A$- and $B$-modules then $M \boxtimes N$ is naturally an $A \boxtimes B$ module. If $M$ is finitely generated over $A$ and $N$ is finitely generated over $B$ then $M \boxtimes N$ is finitely generated over $A \boxtimes B$. In addition, if $A$ and $B$ are domains and $M$ and $N$ are torsion-free modules, then $M \boxtimes N$ is a torsion-free $A \boxtimes B$ module.
\end{enumerate}
\end{corollary}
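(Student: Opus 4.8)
The plan is to deduce Corollary~\ref{cor:segre} from Theorem~\ref{thm:GIT} by exploiting the invariant-theoretic description of the Segre product recorded just above. Write $T = \{(a, a^{-1}) \mid a \in \bC^*\} \cong \bC^*$ for the anti-diagonal torus in $(\bC^*)^2$. The gradings on $A$ and $B$ give compatible algebraic $(\bC^*)^2$-actions on the tensor-product algebra $A \otimes B$ and on the $(A \otimes B)$-module $M \otimes N$, and by construction $A \boxtimes B = (A \otimes B)^T$ as a subring of $A \otimes B$, while $M \boxtimes N = (M \otimes N)^T$ as an $(A \boxtimes B)$-submodule of $M \otimes N$. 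I would grade $A \boxtimes B$ by declaring $(A \boxtimes B)_d = A_d \otimes B_d$; the inclusion $(A_d \otimes B_d)(A_e \otimes B_e) \subseteq A_{d+e} \otimes B_{d+e}$ inside $A \otimes B$ makes this a graded ring, and then $(M \boxtimes N)_d = M_d \otimes N_d$ makes $M \boxtimes N$ a graded module over it.

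For part (a): if $A$ and $B$ are finitely generated $\bC$-algebras then so is $A \otimes B$, and since $T \cong \bC^*$ is reductive, Theorem~\ref{thm:GIT}(a) shows $A \boxtimes B = (A \otimes B)^T$ is finitely generated. For the domain statement I would use that the tensor product over $\bC$ of two domains is a domain: if $A, B$ are domains with fraction fields $K, L$, then $A \otimes_\bC B$ embeds into $K \otimes_\bC L$, and the latter is a domain because $\bC$ is algebraically closed; hence $A \otimes_\bC B$ is a domain, and so is its subring $A \boxtimes B$.

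For part (b): if $M$ and $N$ are finitely generated over $A$ and $B$, then $M \otimes N$ is a finitely generated $(A \otimes B)$-module with a compatible $T$-action, so Theorem~\ref{thm:GIT}(b) gives that $M \boxtimes N = (M \otimes N)^T$ is finitely generated over $A \boxtimes B$. For torsion-freeness, assume $A, B$ are domains and $M, N$ are torsion-free, and set $K = \operatorname{Frac}(A)$, $L = \operatorname{Frac}(B)$. Torsion-freeness of $M$ gives an embedding $M \hookrightarrow M \otimes_A K$ into a $K$-vector space, and likewise $N \hookrightarrow N \otimes_B L$; tensoring over $\bC$ yields an embedding of $M \otimes_\bC N$ into $(M \otimes_A K) \otimes_\bC (N \otimes_B L)$, which is a free module over the domain $K \otimes_\bC L$. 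Hence $M \otimes_\bC N$ is torsion-free over $A \otimes_\bC B$, and since every nonzero element of $A \boxtimes B$ is a nonzero element of the domain $A \otimes_\bC B$, it acts injectively on $M \otimes_\bC N$ and a fortiori on the submodule $M \boxtimes N$. Thus $M \boxtimes N$ is torsion-free over the domain $A \boxtimes B$.

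I expect no genuine obstacle: the argument is formal once Theorem~\ref{thm:GIT} and the invariant-theoretic description of the Segre product are in hand. The one non-bookkeeping input is that a tensor product over $\bC$ of two domains (equivalently, of two field extensions of $\bC$) is again a domain — this is precisely where algebraic closedness of $\bC$ enters, and it is the only point requiring a little care.
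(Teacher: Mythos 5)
Your proof is correct and follows exactly the route the paper intends: realize $A \boxtimes B$ and $M \boxtimes N$ as invariants of the anti-diagonal $\bC^*$ acting on $A \otimes_\bC B$ and $M \otimes_\bC N$, then apply Theorem~\ref{thm:GIT}. The paper states the corollary with no further argument, so the only thing you add is the (standard but worth recording) fact that a tensor product of domains over the algebraically closed field $\bC$ is a domain, which cleanly yields both the domain claim in (a) (via the subring $A \boxtimes B \subseteq A \otimes_\bC B$) and the torsion-freeness in (b) (via embedding $M \otimes_\bC N$ in a free module over $\operatorname{Frac}(A) \otimes_\bC \operatorname{Frac}(B)$). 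No gaps.
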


\section{Proof of Theorem~\ref{mainthm}} \label{sec:proof}

For partitions $\alpha$ and $\lambda$, define
\begin{displaymath}
A_{\alpha}=\bigoplus_{d \ge 0} \bS_{d\alpha}, \qquad
M_{\alpha,\lambda} = \bigoplus_{d \ge 0} \bS_{d\alpha+\lambda}.
\end{displaymath}

\begin{proposition} \label{prop:construct}
Let $U$ be a vector space with $\dim(U) \ge \ell(\alpha), \ell(\lambda)$. Then $A_{\alpha}(U)$ naturally has the structure of a finitely generated graded integral domain over $\bC$, and $M_{\alpha,\lambda}(U)$ naturally has the structure of a finitely generated torsion-free graded $A_{\alpha}(U)$-module.
\end{proposition}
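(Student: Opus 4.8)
The plan is to realize $A_\alpha(U)$ as the section ring of an \emph{ample} line bundle on a partial flag variety, and $M_{\alpha,\lambda}(U)$ as a module of twisted sections of a coherent sheaf there, and then to invoke standard finiteness results for such rings and modules. Write $G=\GL(U)$ and $n=\dim(U)$. Since $\ell(d\alpha)=\ell(\alpha)\le n$ for $d\ge1$ and $\ell(d\alpha+\lambda)=\max(\ell(\alpha),\ell(\lambda))\le n$, every Schur functor occurring in the two graded spaces evaluates to a nonzero irreducible representation of $G$. Let $P\subset G$ be the parabolic subgroup stabilizing the partial flag in $U$ whose jumps occur exactly at the positions where $\alpha$ strictly decreases, so that $\alpha$ extends to a character of $P$; set $X=G/P$ and let $\mathcal{L}$ be the $G$-equivariant line bundle on $X$ attached to this character. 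Because $\langle\alpha,\check\beta\rangle>0$ for each simple coroot $\check\beta$ not in the Levi of $P$, the bundle $\mathcal{L}$ is ample, and for the right sign convention the Borel--Weil theorem gives a natural isomorphism of $G$-representations $H^0(X,\mathcal{L}^{\otimes d})\cong\bS_{d\alpha}(U)$ for all $d\ge0$. (The opposite convention produces $\bS_{d\alpha}(U^*)$ instead; as the hypotheses on $U$ depend only on $\dim U$, running the argument with $U^*$ in place of $U$ then yields the proposition for $U$, so we may assume the first case.) Transport onto $A_\alpha(U)=\bigoplus_{d\ge0}H^0(X,\mathcal{L}^{\otimes d})$ the ring structure given by multiplication of sections: this is a $G$-equivariant graded $\bC$-algebra with degree-zero part $H^0(X,\mathcal{O}_X)=\bC$.

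I would then observe that $A_\alpha(U)$ is a finitely generated graded integral domain. Finite generation is the standard fact that the section ring of an ample line bundle on a projective variety is a finitely generated $\bC$-algebra (e.g., pass to a very ample power to embed $X$ in projective space). It is a domain because $X$ is integral: if $s\in H^0(X,\mathcal{L}^{\otimes d})$ and $t\in H^0(X,\mathcal{L}^{\otimes e})$ are nonzero, then on the complement of the union of their vanishing loci --- a dense open subset of the irreducible $X$, since each vanishing locus is a proper closed subset --- both sections trivialize the relevant powers of $\mathcal{L}$, so $st$ is nonzero there.

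For the module, let $\pi\colon G/B\to X=G/P$ be the projection and $\mathcal{L}_\lambda$ the $G$-equivariant line bundle on $G/B$ attached to $\lambda$; Borel--Weil on $G/B$ gives $H^0(G/B,\pi^*\mathcal{L}^{\otimes d}\otimes\mathcal{L}_\lambda)\cong\bS_{d\alpha+\lambda}(U)$. By the projection formula, $\pi_*(\pi^*\mathcal{L}^{\otimes d}\otimes\mathcal{L}_\lambda)=\mathcal{L}^{\otimes d}\otimes\mathcal{F}$ where $\mathcal{F}:=\pi_*\mathcal{L}_\lambda$ is coherent on $X$ --- indeed a vector bundle, since $\pi$ is a flag bundle and $\lambda$ is dominant for the Levi, so $\mathcal{L}_\lambda$ restricted to each fibre has sections of constant dimension. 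As pushforward preserves global sections, $M_{\alpha,\lambda}(U)=\bigoplus_{d\ge0}H^0(X,\mathcal{L}^{\otimes d}\otimes\mathcal{F})$, which acquires a graded $A_\alpha(U)$-module structure from section multiplication. It is finitely generated by the standard fact that $\bigoplus_{d\ge0}H^0(X,\mathcal{G}\otimes\mathcal{A}^{\otimes d})$ is a finite module over $\bigoplus_{d\ge0}H^0(X,\mathcal{A}^{\otimes d})$ whenever $\mathcal{A}$ is ample and $\mathcal{G}$ is coherent. It is torsion-free because, for nonzero homogeneous $a\in H^0(X,\mathcal{L}^{\otimes e})$ and $m\in H^0(X,\mathcal{L}^{\otimes d}\otimes\mathcal{F})$, the section $a$ trivializes $\mathcal{L}^{\otimes e}$ on a dense open subset of $X$, so $am=0$ would force $m$ to vanish there, whence $m=0$ since $\mathcal{F}$ is torsion-free on the integral scheme $X$.

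The one place requiring genuine care is the bookkeeping in the first paragraph: selecting the parabolic $P$ and normalizing $\mathcal{L}$ so that $\mathcal{L}$ is honestly ample (on the full flag variety it would only be semiample) and $H^0(X,\mathcal{L}^{\otimes d})$ is exactly $\bS_{d\alpha}(U)$, together with the identification of $M_{\alpha,\lambda}(U)$ with the $\mathcal{L}$-twisted sections of the single coherent sheaf $\mathcal{F}$ on $X$ via the projection formula. Once these are set up, the rest is standard projective algebraic geometry, with no further input about Kronecker coefficients or Schur functors required.
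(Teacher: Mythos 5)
Your proof is correct, but it takes a genuinely different route from the paper's. The paper works on the \emph{full} flag variety $X = G/B$, where $\cL(\alpha)$ is usually only semiample, and so cannot directly quote section-ring finiteness for ample bundles. Instead it forms the bigraded ring $R = \bigoplus_{n,m\ge 0} \rH^0(X,\cL(n\alpha+m\lambda))$ (realized as functions on the total space of $(\cL(\alpha)\oplus\cL(\lambda))^*$, hence a domain), and proves finite generation by a representation-theoretic trick: each $R_{n,m}$ is an irreducible $\GL(U)$-representation, so the nonzero multiplication maps $R_{1,0}^{\otimes n}\otimes R_{0,1}^{\otimes m}\to R_{n,m}$ are surjective, whence $R$ is generated by $R_{1,0}\oplus R_{0,1}$. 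Then $A_\alpha(U)$ and $M_{\alpha,\lambda}(U)$ are extracted as the $\bC^*$-invariants and the $(-1)$-semi-invariants for the second grading, and finiteness passes down by Theorem~\ref{thm:GIT}. You instead descend to the partial flag variety $G/P$ adapted to $\alpha$ so that $\cL$ becomes honestly ample, invoke the standard finiteness of $\bigoplus_d \rH^0(X,\cL^{\otimes d})$ and of $\bigoplus_d \rH^0(X,\cF\otimes\cL^{\otimes d})$ for ample $\cL$ and coherent $\cF$, and obtain the module by pushing $\cL_\lambda$ down along $G/B\to G/P$ to the vector bundle $\cF=\pi_*\cL_\lambda$ (using relative Borel--Weil, valid since $\lambda$ is dominant for the Levi). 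Both arguments are sound. The paper's version avoids the parabolic bookkeeping, the ampleness verification, and the sign-convention dance, and it fits more naturally into the GIT framework used throughout the rest of the paper; yours is closer to textbook projective geometry and avoids the invariant-theory input at this stage, at the cost of the extra setup you flag yourself.
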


\begin{proof}
Let $X$ be the flag variety of $\GL(U)$. For every partition $\alpha$ with $\ell(\alpha) \le \dim(U)$, there is a $G$-equivariant line bundle $\cL(\alpha)$ on $X$ whose sections are $\rH^0(X; \cL(\alpha)) = \bS_\alpha(U)$ (this is the Borel--Weil theorem, see \cite[\S 9.3]{fulton}) and they satisfy $\cL(\alpha) \otimes \cL(\beta) = \cL(\alpha + \beta)$. Let $\bV$ be the total space of the vector bundle $(\cL(\alpha) \oplus \cL(\lambda))^*$, and let $R=\rH^0(X; \Sym(\cL(\alpha) \oplus \cL(\lambda)))$ be the ring of global functions on $\bV$. This is an integral domain, since $\bV$ is an irreducible variety. It also has a bigrading given by $R_{n,m}=\rH^0(X; \cL(n \alpha + m\lambda))$. Since each $R_{n,m}$ is a (non-zero) irreducible representation of $\GL(V)$ (by the Borel--Weil theorem), and $R$ is an integral domain, it follows that $R$ is generated as a $\bC$-algebra by $R_{1,0}$ and $R_{0,1}$. In particular, $R$ is finitely generated as a $\bC$-algebra.

The bigrading on $R$ can be regarded as a $(\bC^*)^2$ action. Then $A_\alpha(U)$ is the ring of invariants under the second $\bC^*$ and hence is a finitely generated domain; and $M_{\alpha, \lambda}(U)$ is the degree $1$ piece under the second $\bC^*$ action (this can be interpreted as invariants of a twist by the $-1$ character) and hence is a finitely generated torsion-free module over $A_\alpha(U)$. Here we use Theorem~\ref{thm:GIT} in both cases.
\end{proof}

\begin{remark}
In fact, the above proposition holds without the restriction on $\dim(U)$.
\end{remark}

\begin{proof}[Proof of Theorem~\ref{mainthm}]
Let $U$, $V$, and $W$ be sufficiently large vector spaces satisfying $\dim(U) = \dim(V) \dim(W)$, and let $\omega \colon U \times V \times W \to \bC$ be a non-degenerate trilinear form. Then
\begin{align*}
(A_{\alpha}(U) \boxtimes A_{\beta}(V) \boxtimes A_{\gamma}(W))^{G(\omega)}
&= \bigoplus_{d \ge 0} (\bS_{d\alpha}(U) \otimes \bS_{d\beta}(V) \otimes \bS_{d\gamma}(W))^{G(\omega)} \\
&=\bigoplus_{d \ge 0} \bK_{d\alpha, d\beta, d\gamma}=B_{\alpha,\beta,\gamma}
\end{align*}
by Proposition~\ref{schur-kron}. Since $A_{\alpha}(U)$, $A_{\beta}(V)$, and $A_{\gamma}(W)$ are finitely generated graded integral domains (Proposition~\ref{prop:construct}), so is their Segre product (Corollary~\ref{cor:segre}). Since $G(\omega) \cong \GL(V) \times \GL(W)$ is a reductive group, the above invariant ring is also finitely generated (Theorem~\ref{thm:GIT}). This shows that $B_{\alpha,\beta,\gamma}$ is a finitely generated graded integral domain.

Similarly, we have
\begin{align*}
(M_{\alpha,\lambda}(U) \boxtimes M_{\beta,\mu}(V) \boxtimes M_{\gamma,\nu}(W))^{G(\omega)}
&= \bigoplus_{d \ge 0} (\bS_{d\alpha+\lambda}(U) \otimes \bS_{d\beta+\mu}(V) \otimes \bS_{d\gamma+\nu}(W))^{G(\omega)} \\
&=\bigoplus_{d \ge 0} \bK_{d\alpha+\lambda, d\beta+\mu, d\gamma+\nu}=N_{\alpha,\beta,\gamma}^{\lambda,\mu,\nu}.
\end{align*}
Since $M_{\alpha,\lambda}(U)$, $M_{\beta,\mu}(V)$, and $M_{\gamma,\nu}(W)$ are finitely generated torsion-free graded modules over $A_{\alpha}(U)$, $A_{\beta}(V)$, and $A_{\gamma}(W)$ (Proposition~\ref{prop:construct}), their Segre product is a finitely generated torsion-free graded module over the Segre product of the $A$'s (Corollary~\ref{cor:segre}). The invariant module is finitely generated over the invariant ring (Theorem~\ref{thm:GIT}), and obviously torsion-free. So $N_{\alpha,\beta,\gamma}^{\lambda,\mu,\nu}$ is a finitely generated torsion-free graded $B_{\alpha,\beta,\gamma}$-module.
\end{proof}

\begin{remark}
We could avoid discussing the Segre product by directly constructing the ring $B_{\alpha, \beta, \gamma}$ and module $N_{\alpha, \beta, \gamma}^{\lambda, \mu, \nu}$ using a triple product of flag varieties.
\end{remark}

\section{Remarks and complements} \label{sec:rmk}

\subsection{Algebraic properties} \label{sec:alg}

See \cite[\S 3]{kempf} for the definition of rational singularities.

\begin{proposition}
$B_{\alpha, \beta, \gamma}$ has rational singularities, so in particular is normal and  Cohen--Macaulay.
\end{proposition}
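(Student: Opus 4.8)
The plan is to identify $B_{\alpha,\beta,\gamma}$ with a ring of invariants of a reductive group acting on the homogeneous coordinate ring of a product of flag varieties, and then to invoke Boutot's theorem \cite{boutot}: the ring of invariants of a reductive group acting on an affine variety with rational singularities again has rational singularities. Granting this, normality is part of the definition of having rational singularities, and Cohen--Macaulayness follows in characteristic zero (via Grauert--Riemenschneider vanishing; see \cite{kempf}), so both of the stated consequences are immediate.

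Recall from the proof of Theorem~\ref{mainthm} that, for suitable $U$, $V$, $W$ and a non-degenerate trilinear form $\omega$, we have $B_{\alpha,\beta,\gamma} = R^{G(\omega)}$, where $R = A_\alpha(U) \boxtimes A_\beta(V) \boxtimes A_\gamma(W) = \bigoplus_{d \ge 0} \bS_{d\alpha}(U) \otimes \bS_{d\beta}(V) \otimes \bS_{d\gamma}(W)$ and $G(\omega) \cong \GL(V) \times \GL(W)$ is reductive. By Boutot's theorem it therefore suffices to show that $R$ has rational singularities.

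To do this, let $P_\alpha \subset \GL(U)$ be a parabolic subgroup such that $\cL(\alpha)$ descends to an ample line bundle on the partial flag variety $\GL(U)/P_\alpha$, choose $P_\beta, P_\gamma$ analogously, and set $Y = \GL(U)/P_\alpha \times \GL(V)/P_\beta \times \GL(W)/P_\gamma$, a smooth projective variety. Then the external product $\cN$ of the descended line bundles is ample on $Y$, and Borel--Weil together with the Künneth formula identifies $\rH^0(Y; \cN^{\otimes d})$ with $\bS_{d\alpha}(U) \otimes \bS_{d\beta}(V) \otimes \bS_{d\gamma}(W)$; thus $R$ is the homogeneous coordinate ring of $(Y, \cN)$. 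Since $d\alpha$, $d\beta$, $d\gamma$ are dominant weights, the Borel--Weil--Bott theorem gives $\rH^i(\GL(U)/P_\alpha; \cL(d\alpha)) = 0$ for $i > 0$ and all $d \ge 0$, and similarly on the other two factors, so by Künneth $\rH^i(Y; \cN^{\otimes d}) = 0$ for $i > 0$ and all $d \ge 0$ --- in particular $\rH^i(Y; \mathcal{O}_Y) = 0$ for $i > 0$. Therefore $R$ is the coordinate ring of the affine cone over a smooth projective variety with respect to an ample line bundle none of whose nonnegative powers has higher cohomology, and such a cone has rational singularities (see \cite{kempf}). By Boutot, $B_{\alpha,\beta,\gamma} = R^{G(\omega)}$ has rational singularities as well.

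This is essentially an assembly of standard ingredients --- Borel--Weil--Bott vanishing, rational singularities of cones over flag varieties, and Boutot's theorem --- and I do not anticipate a real obstacle. The one place requiring a bit of care is setting things up so that the clean ``cone over a smooth projective variety'' statement applies: one should replace the full flag varieties of the proof of Theorem~\ref{mainthm} by the partial flag varieties $\GL(U)/P_\alpha$, etc., on which the relevant line bundle becomes genuinely ample, and check that the higher-cohomology vanishing survives --- which is again just Borel--Weil--Bott for dominant weights.
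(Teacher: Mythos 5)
Your proof is correct and follows essentially the same route as the paper's. Both arguments identify $B_{\alpha,\beta,\gamma}$ as the $G(\omega)$-invariants of the Segre product $R = A_\alpha(U) \boxtimes A_\beta(V) \boxtimes A_\gamma(W)$, observe that $R$ is the homogeneous coordinate ring of (a product of) flag varieties and hence has rational singularities, and then finish with Boutot's theorem. The only difference is one of exposition: the paper simply cites Kempf for the rational singularities of $R$, while you unpack that citation, carefully passing to partial flag varieties so that the line bundle becomes genuinely ample and verifying the required higher-cohomology vanishing via Borel--Weil--Bott and Künneth before invoking the cone criterion. This is a reasonable and correct amount of extra detail, but it is not a different proof.
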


\begin{proof}
The ring $A_\alpha(U) \boxtimes A_\beta(V) \boxtimes A_\gamma(W)$ is the homogeneous coordinate ring of a homogeneous space for $\GL(U) \times \GL(V) \times \GL(W)$ and hence has rational singularities \cite[\S 2]{kempf}. This property is preserved by taking invariants under a reductive group \cite[Corollaire]{boutot}.
\end{proof}

We now give a proof of the ``only if'' direction of Conjecture~\ref{mainconj}. Assume that $(\alpha, \beta, \gamma)$ is stable. In particular, $g_{d\alpha, d\beta, d\gamma}$ is constant for $d \gg 0$. Furthermore, $g_{\alpha, \beta, \gamma} > 0$ by definition. Thus $B_{\alpha,\beta,\gamma}$ is a finitely generated graded normal domain whose Hilbert polynomial has degree~0 and whose first graded piece is non-zero. It follows that $B_{\alpha,\beta,\gamma} \cong \bC[t]$, with $t$ of degree one, and so $g_{d\alpha, d\beta, d\gamma} = 1$ for all $d > 0$.

\begin{remark}
In the above argument, we used that $g_{\alpha, \beta, \gamma} > 0$ is part of the definition for $(\alpha, \beta, \gamma)$ to be stable. If we dropped that assumption, we could have a case where $g_{d\alpha, d\beta, d\gamma} = 0$ for $d \gg 0$, and hence for all $d > 0$ since $B_{\alpha, \beta, \gamma}$ is a domain. This means that $B \cong \bC$ and hence that $g_{\lambda + d\alpha, \mu + d\beta, \nu + d\gamma} = 0$ for $d \gg 0$ for all $(\lambda, \mu, \nu)$. We cannot improve this to $d>0$: $g_{d\alpha, d\beta, d\gamma} = 0$ for all $d > 0$ if $\alpha = \beta = (3)$ and $\gamma = (1,1,1)$, but $g_{\lambda + \alpha, \mu + \beta, \nu + \gamma} = 1$ for $\lambda = \mu = \nu = (1,1,1)$.
\end{remark}

\begin{remark}
From what we have shown, $(\alpha, \beta, \gamma)$ is stable if and only if the Krull dimension of $B_{\alpha, \beta, \gamma}$ is $1$. Since $B_{\alpha, \beta, \gamma}$ is a ring of invariants, this property can be determined algorithmically. Although it is probably impractical, there did not seem to be an algorithm previously for determining stability.
\end{remark}

\begin{remark}
Define $G^{\alpha,\beta,\gamma}_{\lambda,\mu,\nu}$ to be the dimension of the vector space $N_{\alpha,\beta,\gamma}^{\lambda,\mu,\nu} \otimes \Frac(B_{\alpha, \beta, \gamma})$. (Note the swap of superscripts and subscripts.) If $(\alpha, \beta, \gamma)$ is stable then $G^{\alpha,\beta,\gamma}_{\lambda,\mu,\nu}$ is equal to the limiting value of $g_{\alpha+d\lambda,\beta+d\mu,\gamma+d\nu}$. In particular, $G_{\lambda,\mu,\nu}^{(1),(1),(1)}$ is the usual stable Kronecker coefficient $G_{\lambda,\mu,\nu}$. It would be interesting if one could say anything about the numbers $G^{\alpha,\beta,\gamma}_{\lambda,\mu,\nu}$ in general. For example, there is a well-known relationship between stable Kronecker coefficients and Littlewood--Richardson coefficients. Does this generalize in any way to the $G^{\alpha,\beta,\gamma}_{\lambda,\mu,\nu}$? 
\end{remark}

\begin{remark}
Since the function $d \mapsto g_{d\alpha, d\beta, d\gamma}$ is given by a quasi-polynomial $p_{\alpha, \beta, \gamma}(d)$ for all $d \ge 0$ (see \S\ref{ss:related-work}), one can ask if the rings $B_{\alpha,\beta,\gamma}$ have toric degenerations. This would imply that there is a rational polytope $Q(\alpha, \beta, \gamma)$ such that the $p_{\alpha, \beta, \gamma}(d)$ is the number of integer points in the $d$th dilate of $Q$ for all $d \ge 0$. Ehrhart reciprocity \cite[Theorem 4.6.9]{stanley} implies that $|p_{\alpha, \beta, \gamma}(-d)|$ is the number of integer points in the interior of the $d$th dilate of $Q$. So in particular, $p_{\alpha, \beta, \gamma}(d) \ge |p_{\alpha, \beta, \gamma}(-d)|$. 

However, from \cite[Theorem 2.4]{BOR}, this fails for $\alpha = (6,6)$, $\beta = (7,5)$, $\gamma = (6,4,2)$ where 
\[
p_{\alpha, \beta, \gamma}(d) = \begin{cases} (d+2)/2 & \text{if $d$ is even} \\ (d-1)/2 & \text{if $d$ is odd}\end{cases}.
\]
In particular, $p(1) = 0$ and $p(-1) = -1$. So no such polytopes exist in general. We thank Mateusz Micha{\l}ek for bringing this example to our attention.
\end{remark}

\subsection{Littlewood--Richardson coefficients}

If we replace $V \otimes W$ in \eqref{eq1} with $V \oplus W$, then we get a version of Theorem~\ref{mainthm} with Kronecker coefficients replaced by Littlewood--Richardson coefficients $c^\lambda_{\mu, \nu}$ (see \cite[(3.11)]{expos}). In this case, a different construction for the analog of the ring $B$ is given in \cite{LRpoly} and the rational singularities property was used to deduce that the function $d \mapsto c^{d\lambda}_{d\mu,d\nu}$ is a polynomial for $d \ge 0$ \cite[Corollary 3]{LRpoly}. This gives the following concrete statement (which can be deduced from previous work, see Remark~\ref{rmk:LR}):

\begin{theorem} \label{thm:LR}
Pick partitions $\alpha, \beta, \gamma$ with $|\beta|+|\gamma|=|\alpha|$. The following are equivalent:
\begin{enumerate}[\rm \indent (a)]
\item $c^\alpha_{\beta, \gamma} = 1$.
\item $c^{d\alpha}_{d\beta, d\gamma} = 1$ for all $d>0$.
\item For all partitions $\lambda, \mu, \nu$ with $|\mu| + |\nu| = |\lambda|$, $c^{\lambda + d\alpha}_{\mu + d\beta, \nu + d\gamma}$ is constant for $d \gg 0$.
\end{enumerate}
\end{theorem}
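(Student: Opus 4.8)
The plan is to mimic \S\ref{sec:proof}--\S\ref{sec:alg} with the Littlewood--Richardson decomposition $\bS_\lambda(V \oplus W) = \bigoplus_{\mu,\nu} (\bS_\mu(V) \otimes \bS_\nu(W))^{\oplus c^\lambda_{\mu,\nu}}$ (see \cite[(3.11)]{expos}) in place of the non-degenerate trilinear form --- equivalently, replacing $V \otimes W$ by $V \oplus W$ in \eqref{eq1} and $G(\omega)$ by the stabilizer in $\GL(U)$ of a direct-sum decomposition $U = V^* \oplus W^*$, which is again isomorphic to $\GL(V) \times \GL(W)$. Running the argument of \S\ref{sec:proof} verbatim, I would obtain a finitely generated graded integral domain $B'$ whose degree-$d$ piece has dimension $c^{d\alpha}_{d\beta,d\gamma}$ and a finitely generated torsion-free graded $B'$-module $N'$ whose degree-$d$ piece has dimension $c^{\lambda + d\alpha}_{\mu + d\beta,\nu + d\gamma}$; the standing hypotheses $|\beta| + |\gamma| = |\alpha|$ and $|\mu| + |\nu| = |\lambda|$ are exactly what makes these graded pieces meaningful. (As in \S\ref{sec:alg}, $B'$ would have rational singularities, but I will not need this, since I can quote \cite[Corollary 3]{LRpoly} directly for the fact that $q(d) := c^{d\alpha}_{d\beta,d\gamma}$ is a genuine polynomial in $d$ for all $d \ge 0$, with $q(0) = c^{\emptyset}_{\emptyset,\emptyset} = 1$.)

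With this in hand I would establish the cycle (a) $\Rightarrow$ (b) $\Rightarrow$ (c) $\Rightarrow$ (a). For (b) $\Rightarrow$ (c): if $c^{d\alpha}_{d\beta,d\gamma} = 1$ for all $d > 0$ then $B'$ has one-dimensional graded pieces, so, being a domain with $B'_0 = \bC$, it is $\bC[t]$ with $t$ in degree one; the structure theorem for finitely generated modules over a principal ideal domain then writes the torsion-free module $N'$ as a finite direct sum of grading shifts of $B'$, whence $c^{\lambda + d\alpha}_{\mu + d\beta,\nu + d\gamma}$ is eventually constant. This is verbatim the proof of Corollary~\ref{maincor}. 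For (c) $\Rightarrow$ (a): specializing (c) to $\lambda = \mu = \nu = \emptyset$ says $q(d)$ is eventually constant, so the polynomial $q$ is constant, and $q \equiv q(0) = 1$; in particular $c^\alpha_{\beta,\gamma} = q(1) = 1$. (This also re-derives (b), and in any case (b) $\Rightarrow$ (a) is trivial by taking $d = 1$.)

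The one genuinely non-formal step, which I expect to be the main obstacle, is (a) $\Rightarrow$ (b): this is Fulton's conjecture, which I would cite rather than reprove --- it was proved by Knutson, Tao, and Woodward and, independently and by different methods, by Belkale. In the language above it says that $\dim B'_1 = c^\alpha_{\beta,\gamma} = 1$ already forces $B' \cong \bC[t]$, i.e.\ that the Krull dimension of $B'$ drops to $1$, equivalently that $q$ has degree $0$. Unlike the Kronecker ``only if'' argument of \S\ref{sec:alg}, where the stability hypothesis directly supplied ``the Hilbert polynomial of $B_{\alpha,\beta,\gamma}$ has degree $0$'', here the hypothesis constrains only the degree-$1$ piece, and a finitely generated normal graded domain with rational singularities and a one-dimensional degree-$1$ part need not be generated in degree $1$; so no amount of the formal structure of $B'$ suffices, and the combinatorial content of Fulton's conjecture must be imported. (One could instead hope to exhibit a toric degeneration of $B'$ over a \emph{lattice} hive polytope, which when $c^\alpha_{\beta,\gamma} = 1$ contains a single lattice point and is therefore $0$-dimensional; but that uses input outside the methods of this paper.)
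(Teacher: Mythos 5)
Your proposal is correct and matches the paper's proof essentially step for step: both cite \cite[Corollary 3]{LRpoly} for polynomiality of $d\mapsto c^{d\alpha}_{d\beta,d\gamma}$ to get (c)~$\Rightarrow$~(a), both import Fulton's conjecture via \cite[\S 6.1]{KTW} for (a)~$\Leftrightarrow$~(b), and both prove (b)~$\Rightarrow$~(c) by running the Kronecker argument with $V\oplus W$ in place of $V\otimes W$. Your commentary correctly pinpoints (a)~$\Rightarrow$~(b) as the only genuinely non-formal step and explains why it cannot be extracted from the ring-theoretic structure alone, which the paper's terse proof leaves implicit.
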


\begin{proof}
(a) and (b) are equivalent by \cite[\S 6.1]{KTW}. (c) implies (a) and (b) by taking $\lambda=\mu=\nu=\emptyset$ and using that $d \mapsto c^{d\alpha}_{d\beta,d\gamma}$ is a polynomial for all $d \ge 0$ and that $c^\emptyset_{\emptyset,\emptyset}=1$.

(b) implies (c) by following the same arguments in this paper.
\end{proof}

\begin{remark} \label{rmk:LR}
Regarding Theorem~\ref{thm:LR}:
\begin{enumerate}
\item The analogs of (a) and (b) for Kronecker coefficients are not equivalent: $g_{(2,1), (2,1), (2,1)} = 1$ while $g_{(4,2),(4,2),(4,2)} = 2$.
\item The equivalence of (b) and (c) can also be deduced from the fact that Littlewood--Richardson coefficients count the number of integer points in rational polytopes \cite{BZ} and using \cite[Proposition 4.4]{stembridge}. \qedhere
\end{enumerate}
\end{remark}

\subsection{Plethysm}

The technique used in this paper can also be used to prove stability properties of plethysm coefficients. The ideas are similar, so we omit the details. Let $a_{\lambda, \mu}^\nu$ be the multiplicity of $\bS_\nu$ in $\bS_\lambda \circ \bS_\mu$ (the composition of Schur functors). We emphasize that this is not symmetric in $\lambda$ and $\mu$.

\begin{theorem}
Fix partitions $\alpha, \beta, \gamma$. Assume that $a_{d\alpha, \beta}^{d\gamma} = 1$ for $d \ge 0$. Then for all $\lambda, \nu$, the function $d \mapsto a_{\lambda + d\alpha, \beta}^{\nu + d\gamma}$ is constant for $d \gg 0$.
\end{theorem}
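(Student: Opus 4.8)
The plan is to run the argument of \S\ref{sec:proof} essentially verbatim, with the non-degenerate trilinear form $\omega$ and the group $\GL(V) \times \GL(W)$ replaced by composition of Schur functors and a single general linear group. The representation-theoretic input replacing Proposition~\ref{schur-kron} is the ``stable plethysm'' identity: for a finite dimensional vector space $V$ one has a natural decomposition of $\GL(V)$-representations
\[
\bS_{\rho}(\bS_{\beta}(V)) = \bigoplus_{\sigma} \bC^{a_{\rho,\beta}^{\sigma}} \otimes \bS_{\sigma}(V),
\]
in which the summands with $\ell(\sigma) > \dim(V)$ simply vanish and do not affect the multiplicities of the others, together with the elementary fact that $(\bS_{\sigma}(V) \otimes \bS_{\tau}(V^*))^{\GL(V)}$ is one-dimensional when $\sigma = \tau$ (and $\dim(V) \ge \ell(\sigma)$) and is zero otherwise.

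Fix $V$ with $\dim(V)$ large enough that $\dim(V) \ge \ell(\gamma), \ell(\nu)$ and $\dim \bS_{\beta}(V) \ge \ell(\alpha), \ell(\lambda)$. Applying Proposition~\ref{prop:construct} with $U = \bS_{\beta}(V)$ shows that $A_{\alpha}(\bS_{\beta}(V)) = \bigoplus_{d \ge 0} \bS_{d\alpha}(\bS_{\beta}(V))$ is a finitely generated graded domain and $M_{\alpha,\lambda}(\bS_{\beta}(V)) = \bigoplus_{d \ge 0} \bS_{d\alpha+\lambda}(\bS_{\beta}(V))$ is a finitely generated torsion-free graded module over it; applying it with $U = V^*$ does the same for $A_{\gamma}(V^*)$ and $M_{\gamma,\nu}(V^*)$. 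Now form Segre products and take $\GL(V)$-invariants:
\begin{align*}
B &:= \bigl(A_{\alpha}(\bS_{\beta}(V)) \boxtimes A_{\gamma}(V^*)\bigr)^{\GL(V)} = \bigoplus_{d \ge 0} \bigl(\bS_{d\alpha}(\bS_{\beta}(V)) \otimes \bS_{d\gamma}(V^*)\bigr)^{\GL(V)} = \bigoplus_{d \ge 0} \bC^{a_{d\alpha,\beta}^{d\gamma}}, \\
N &:= \bigl(M_{\alpha,\lambda}(\bS_{\beta}(V)) \boxtimes M_{\gamma,\nu}(V^*)\bigr)^{\GL(V)} = \bigoplus_{d \ge 0} \bC^{a_{d\alpha+\lambda,\beta}^{d\gamma+\nu}},
\end{align*}
where the displayed identifications use the two facts above, and the choice of $V^*$ rather than $V$ is precisely what turns ``take $\GL(V)$-invariants'' into ``extract the multiplicity of $\bS_{d\gamma}(V)$, resp.\ $\bS_{d\gamma+\nu}(V)$''. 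By Corollary~\ref{cor:segre} the Segre products are a finitely generated graded domain and a finitely generated torsion-free graded module over it, and since $\GL(V)$ is reductive, Theorem~\ref{thm:GIT} shows that $B$ is a finitely generated graded domain and $N$ is a finitely generated (and visibly torsion-free) graded $B$-module. One concludes exactly as in Corollary~\ref{maincor}: the hypothesis $a_{d\alpha,\beta}^{d\gamma} = 1$ for all $d \ge 0$ makes $B$ a connected graded domain with one-dimensional graded pieces, hence $B \cong \bC[t]$ with $t$ in degree one, so the structure theorem for $\bC[t]$-modules gives $N \cong \bigoplus_i B[r_i]$ and therefore $d \mapsto a_{d\alpha+\lambda,\beta}^{d\gamma+\nu}$ is constant for $d \gg 0$.

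The one point needing an honest argument rather than a citation — what I would call the main obstacle — is that a single $V$ works for all $d$ at once. This is fine because the only lengths that must be controlled, namely $\ell(\gamma)$, $\ell(\nu)$, $\ell(\alpha)$, and $\ell(\lambda)$ (the last two entering only through $\dim \bS_{\beta}(V)$), are independent of $d$, and the multiplicity of a fixed $\bS_{\sigma}(V)$ in $\bS_{d\alpha}(\bS_{\beta}(V))$ equals the plethysm coefficient $a_{d\alpha,\beta}^{\sigma}$ whenever $\ell(\sigma) \le \dim(V)$, regardless of how many other partitions have been truncated to zero. The degenerate case $\beta = \emptyset$ (where $\bS_{\beta}(V)$ is one-dimensional) is handled directly or simply excluded as trivial. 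Everything else is formal and parallels \S\ref{sec:proof}; as in the remark there, one could alternatively avoid the Segre product by a direct geometric construction using the flag varieties of $\GL(\bS_{\beta}(V))$ and $\GL(V)$.
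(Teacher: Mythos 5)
Your proposal is correct and follows essentially the same route as the paper: the paper also sets $C_{\alpha,\beta,\gamma} = (A_\alpha(\bS_\beta(V)) \boxtimes A_\gamma(V^*))^{\GL(V)}$ and $P_{\alpha,\beta,\gamma}^{\lambda,\nu} = (M_{\alpha,\lambda}(\bS_\beta(V)) \boxtimes M_{\gamma,\nu}(V^*))^{\GL(V)}$ and identifies their graded dimensions with $a_{d\alpha,\beta}^{d\gamma}$ and $a_{d\alpha+\lambda,\beta}^{d\gamma+\nu}$, then appeals to the machinery of \S\ref{sec:proof}. The paper leaves the ``single $V$ works for all $d$'' point, the Schur-pairing identity, and the final $\bC[t]$-module step implicit, whereas you spell them out, but the underlying argument is the same.
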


\begin{proof}
Let $V$ be a vector space of large dimension. We can build a finitely generated integral domain
\[
C_{\alpha, \beta, \gamma} = (A_\alpha(\bS_\beta(V)) \boxtimes A_\gamma(V^*))^{\GL(V)} =  \bigoplus_{d \ge 0} (\bS_{d\alpha}(\bS_\beta(V)) \otimes \bS_{d\gamma}(V^*))^{\GL(V)}.
\]
and a finitely generated torsion-free $C_{\alpha, \beta, \gamma}$-module
\[
P_{\alpha, \beta, \gamma}^{\lambda, \nu} = (M_{\alpha, \lambda}(\bS_\beta(V)) \boxtimes M_{\gamma, \nu}(V^*))^{\GL(V)} = \bigoplus_{d \ge 0} (\bS_{d\alpha + \lambda}(\bS_\beta(V)) \otimes \bS_{d\gamma + \nu}(V^*))^{\GL(V)}.
\]
Note that $\dim(C_{\alpha, \beta, \gamma})_d = a_{d\alpha, \beta}^{d\gamma}$ and $\dim(P_{\alpha, \beta, \gamma}^{\lambda, \nu})_d = a_{\lambda + d\alpha, \beta}^{\nu + d\gamma}$.
\end{proof}

\begin{example}
\begin{enumerate}
\item Take $\gamma = \beta$ and $\alpha = (1)$. This is a result of Brion \cite{brion}.

\item Take $\beta = \alpha = (2)$ and $\gamma = (2,2)$. To see that $a^{d\gamma}_{d\alpha, \beta} = 1$ for all $d \ge 0$, note that we can take $V = \bC^2$. Then the ring of $\SL(2)$-invariants on $\Sym(\Sym^2(\bC^2))$ is a polynomial ring on one generator of degree $2$ (the discriminant). \qedhere
\end{enumerate}
\end{example}

\subsection{Twisted commutative algebras}

Murnaghan's stability theorem was reinterpreted in \cite[\S 3.4]{fimodules} as the fact that the Segre product of finitely generated FI-modules is finitely generated. This is a useful reformulation since it turns Murnaghan's numerical result into a structural result.

The rings $A_{\alpha}$ are examples of twisted commutative algebras (see \cite{expos} for an introduction to these objects), and modules over the tca $A_{(1)}$ are equivalent to FI-modules (see \cite[\S 1.3]{symc1}). One might therefore hope that the stability results in this paper could be reformulated as structural results for $A_{\alpha}$-modules. We believe that there is such a reformulation, though it is more complicated than the case of Murnaghan's theorem. Nonetheless, this point of view led to the proof in this paper. We plan to pursue the connection to tca's in future work.

\end{document}